\documentclass{IEEEtran}
\usepackage{cite}
\usepackage{url}
\usepackage{amsmath,amssymb,amsfonts}
\usepackage{multirow}
\usepackage{amsthm}
\usepackage{xcolor}
\usepackage{graphicx}
\usepackage{textcomp}
\usepackage{epsfig}

\usepackage{epstopdf}
\usepackage[ruled]{algorithm}
\usepackage{algorithmic}
\usepackage{enumerate}
\usepackage{float}
\usepackage{graphics,graphicx}
\usepackage{subfigure}
\usepackage{bbm}

\usepackage{array}
\usepackage{mathtools}
\usepackage[justification=centering]{caption}
\allowdisplaybreaks[3]
\usepackage{url}
\DeclareGraphicsExtensions{.png}
\graphicspath{{./Pics/}}
\usepackage{color}
\usepackage{xspace}

\usepackage{hyperref}
\hypersetup{
    colorlinks=true,
    linkcolor=blue,
    filecolor=gray,      
    urlcolor=blue,
    citecolor=blue,
}

\def\BibTeX{{\rm B\kern-.05em{\sc i\kern-.025em b}\kern-.08em
		T\kern-.1667em\lower.7ex\hbox{E}\kern-.125emX}}

\newtheorem{theorem}{Theorem}

\newtheorem{assumption}{Assumption}
\newtheorem{proposition}{Proposition}

\begin{document}
	
	\title{\LARGE \bf An Accelerated Distributed Optimization with
Equality and Inequality Coupling Constraints}

\author{Chenyang Qiu, \IEEEmembership{Graduate Student Member, IEEE}, Yangyang Qian, \IEEEmembership{Member, IEEE}, Zongli Lin, \IEEEmembership{Fellow, IEEE}, and Yacov A. Shamash,
\IEEEmembership{Life Fellow, IEEE} 
\vspace{-1.5em}
\thanks{%
This work relates to Department of Navy Awards N00014-23-1-2124 and N00014-24-1-2287 issued by the Office of Naval Research. The United States
Government has a royalty-free license throughout the world in all
copyrightable material contained herein. \emph{(Corresponding author: Zongli Lin.)}} 
\thanks{%
Chenyang Qiu, Yangyang Qian, and Zongli Lin are with the Charles L. Brown Department of Electrical and Computer Engineering, University of Virginia, Charlottesville, VA 22904, USA (e-mail: nzp4an@virginia.edu; jbt4up@virginia.edu;  zl5y@virginia.edu).}
\thanks{%
Yacov A. Shamash is with the Department of Electrical and Computer
Engineering, Stony Brook University, Stony Brook, NY 11794, USA (e-mail:
yacov.shamash@stonybrook.edu).}} 
\maketitle	
 
\begin{abstract}
This paper studies distributed convex optimization with both affine equality and nonlinear inequality couplings through the duality analysis. We first formulate the dual of the coupling-constraint problem and reformulate it as a consensus optimization problem over a connected network.
To efficiently solve this dual problem and hence the primal problem, we design an accelerated linearized algorithm where, at each round, a look-ahead linearization of the separable objective is combined with a quadratic penalty on the Laplacian constraint, a proximal step, and an aggregation of iterations.
On the theory side, we prove non-ergodic rates for both the primal optimality error and the feasibility error. On the other hand, numerical experiments show a faster decrease of optimality error and feasibility residual than the state-of-the-art algorithms under the same communication budget.
\end{abstract}

\begin{IEEEkeywords}
Distributed constrained optimization, accelerated algorithm, coupling constraints   
\end{IEEEkeywords}

\section{Introduction}\label{sec:introduction}

Distributed optimization seeks to minimize a global objective across a multi-agent system via localized computation and communication and finds many applications, including smart grids, sensor networks, and distributed learning \cite{yang2019survey}. A particularly challenging class of distributed optimization problems is the coupling-constraint problem (CCP), where agents must coordinate local decisions subject to global coupling constraints. Such couplings increase the complexity of the problem, especially in decentralized settings where no central coordinator exists and each agent has only local information.

A classical distributed optimization problem is the economic dispatch problem, also referred to as resource allocation \cite{yang2016distributed}. It is typically formulated as a CCP, where the goal is to minimize the total cost while meeting the demand and satisfying both the individual and the global limitations. Among these constraints, a global affine equality constraint commonly represents the power balance condition in smart grids, ensuring that total generation meets total demand \cite{wen2021recent}. Other physical limitations are modeled as coupling inequality constraints, such as emission constraints or network capacity limits, which capture interdependent physical relationships among different units \cite{carrillo2022effect,benidris2016emission}.

A lot of efforts have aimed to handle the coupled equality constraint, for instance, via the alternating direction method of multipliers (ADMM) \cite{chang2014multi,chen2017admm}, mirror-based approaches \cite{nedic2018improved}, and gradient-tracking of dual problem \cite{zhang2020distributed}. 
Besides, there are some works that focus on distributed optimization problems with coupling affine inequality constraints \cite{Li2019DistributedPA, Du2024DistributedAO, du2025linear}. However, these works are not suitable for optimization problems subject to nonlinear inequality constraints.

In particular, to address the challenging problems involving globally coupled nonlinear inequality constraints, formed by aggregating all local constraints across the nodes, various distributed algorithms have been developed \cite{liang2019distributed1,liang2019distributed2,liu2020unitary,wu2022distributed,falsone2023augmented, chang2014distributed}. 
In \cite{liang2019distributed1}, smooth convex programs with globally coupled inequalities are solved via a dual subgradient method with iterate-averaging feedback. The follow-up \cite{liang2019distributed2} introduces an operator-splitting primal–dual framework that handles both equality and inequality couplings. Ref. \cite{liu2020unitary} develops a distributed subgradient scheme combining dual methods with dynamic average consensus to iteratively minimize local dual models, and extends it via dual decomposition to handle functionally coupled constraints. In \cite{wu2022distributed}, the sparsely coupled and densely coupled constraints are efficiently handled using different techniques, respectively.
Ref. \cite{falsone2023augmented} introduces an Augmented Lagrangian Tracking (ALT) distributed algorithm to handle both affine equality and nonlinear inequality coupling constraints and provides an asymptotic convergence result under the assumption of convexity. 
Their convergence properties have also been established, ranging from asymptotic convergence \cite{chang2014distributed,falsone2023augmented, liang2019distributed1} to explicit rates such as ergodic $\mathcal{O}(\tfrac{\ln N}{\sqrt{N}})$ \cite{liang2019distributed2}, nonergodic $\mathcal{O}(1/\sqrt{N})$ \cite{liu2020unitary}, and ergodic $\mathcal{O}(1/N)$\cite{wu2022distributed}, where $N$ is the total number of iterations.

It is noted that a majority of existing distributed algorithms for solving the CCP do not consider accelerated convergence and therefore exhibit relatively slow ergodic convergence rates. 
Motivated by this limitation, this work aims to develop a distributed algorithm with a provable accelerated non-ergodic convergence rate, extending the theoretical guarantees of classical first-order methods to the CCP framework with general (possibly nonsmooth) cost functions. Nesterov introduced a fast first-order method for unconstrained convex optimization to enhance the efficiency of first-order methods, significantly accelerating the convergence rate to $O(1/N^2)$ \cite{nesterov1983method}. Later, numerous algorithms adapted Nesterov's acceleration idea for different optimization problems, such as the Fast Iterative Shrinkage-Thresholding Algorithm (FISTA) \cite{beck2009fast}, fast Lagrangian-based algorithms \cite{sabach2022faster,chen2025fast}, and distributed optimization algorithms \cite{cao2025dcatalyst,baranwal2024distributed}. Nevertheless, these algorithms do not apply to the distributed CCP. 

This paper proposes an algorithm for the distributed optimization problems with coupling equality and inequality constraints over an undirected communication network, significantly improving the nonergodic convergence rate, where the nonergodic rate reflects the convergence performance of individual iteration and does not rely on averaging. Specifically, we first transform the CCP into a dual problem. Given the separable nature of the objective function, this dual problem can be interpreted as a consensus optimization problem. By leveraging the linearized method of multipliers, we develop an accelerated distributed algorithm to solve the dual problem, thereby enabling an efficient distributed solution to the original CCP. In particular, we establish that at the $N$th iteration, the proposed algorithm achieves a non-ergodic convergence rate of $O(1/N^2)+O(1/N)$ for the primal optimality error, while the constraint violation decreases at a non-ergodic rate of $O(1/N^2)+O(1/N)$. These results improve upon existing prior works, where only ergodic or asymptotic convergence guarantees have been established, as in \cite{falsone2023augmented, chang2014distributed, liu2020unitary, liang2019distributed1, liang2019distributed2,wu2022distributed}.


The remainder of this paper is organized as follows. Section \ref{sec: Problem Formulation} introduces the problem formulation. Section \ref{sec: Algorithm Development} develops our accelerated distributed algorithm. Section \ref{sec: Convergence Analysis} carries out the convergence analysis. Section \ref{sec: Simulation} provides numerical experiments to validate our algorithm. Section \ref{sec: Conclusion} concludes the paper.

\textbf{Notation}: For a differentiable function $f: \mathbb{R}^{p} \rightarrow \mathbb{R}$, its gradient at $x \in \mathbb{R}^{p}$ is denoted by $\nabla f(x)$. 
For a non-differentiable function $f: \mathbb{R}^{p} \rightarrow \mathbb{R}$, $\partial f(x)$ represents a subgradient at $x \in \mathbb{R}^{p}$. 
A convex function \( f: X \rightarrow (-\infty, +\infty] \) is called proper if \( f(x) > -\infty \) for all \( x \in X \) and $f(x)$ is not trivially equal to $+\infty$. The relative interior of a set $S$, $\operatorname{ri}(S)$, is defined as $
\operatorname{ri}(S)=\left\{x \in S: \text { there exists } \epsilon>0 \text { such that } B_\epsilon(x) \cap \operatorname{aff}(S) \subseteq S\right\}
$, where $\operatorname{aff}(S)$ is the affine hull of $S$ and $B_\epsilon(x)$ is a ball of radius $\epsilon$ centered on $x$. Let $P_Y$ be the projection operator onto the convex set $Y$. 
We denote by $\mathbb{R}_{+}^p$ the nonnegative orthant in $\mathbb{R}^p$, i.e., $\mathbb{R}_{+}^p = \{ x \in \mathbb{R}^p \mid x_i \geq 0,\ \forall i = 1, \dots, p \}$. The projection of a vector $y \in \mathbb{R}^p$ onto $\mathbb{R}_{+}^p$ is denoted by $[y]_+$.
The symbols $0_{p}$, $1_{p}$, $O_{p}$, and $I_{p}$ are used to denote the $p$-dimensional all-zero vector, all-one vector, zero matrix, and identity matrix, respectively. Let $\otimes$ be the Kronecker product, $\langle\cdot, \cdot\rangle$ be the Euclidean inner product, and $\|\cdot\|$ be the $\ell_2$ norm. For a positive semidefinite matrix $A \succeq {O}_{p}$ and a vector ${x} \in \mathbb{R}^p,\|{x}\|_A^2= {x}^{\rm T} A  {x}$, $A^{\dagger}$ is the pseudoinverse of $A$, and $\lambda_2(A)$ is the smallest nonzero eigenvalue of matrix $A$. 

\section{Problem Formulation}\label{sec: Problem Formulation}
Consider the CCP for a distributed network of multiple agents. These agents can communicate with each other through a communication network. The communication network is modeled by a connected undirected graph $\mathcal{G}=(\mathcal{V}, \mathcal{E})$, where $\mathcal{V}=\{1,2, \ldots,n\}$ is the set of nodes and $\mathcal{E} \subseteq\{(i, j) \subseteq \mathcal{V} \times \mathcal{V} \mid i \neq j\}$ is the set of edges. For each agent $i \in \mathcal{V}$, the set of its neighbors is denoted by $\mathcal{N}_i=$ $\{j \in \mathcal{V} \mid (i, j) \in \mathcal{E}\}$. 
The objective of the CCP is to minimize the summation of the costs of all agents in the network while satisfying both the global coupling constraints and individual constraints. To be more specific, the formulation of CCP considered in this paper is defined as follows,
\begin{align}\label{economic dispatch problem}
 ~ \min _{x \in X}& \ f(x) = \sum_{i=1}^n f_i(x_i), \\
\operatorname{s.t.} & \ \sum_{i=1}^n B_i x_i = \sum_{i=1}^n b_i, \notag \\
 &\ \sum_{i=1}^n h_i(x_i) \leq 0 \notag,
\end{align}
where $x = [x_1^{\rm{T}} \;x_2^{\rm{T}} \ldots x_n^{\rm{T}}]^{\rm{T}} \in \mathbb{R}^{n p}$, $X_i \subseteq \mathbb{R}^p$ is the local constraint set for each $x_i$, and $X = X_1 \times X_2 \times \ldots \times X_n$. In the constraints, $B_i \in \mathbb{R}^{d \times p}$ and $b_i \in \mathbb{R}^d$, while $h_i$ is a possibly nonlinear function.

\begin{assumption}\label{ass: strongly convex}
For each $i \in \mathcal{V}$, we assume that
\begin{enumerate}
    \item $f_i(x_i): \mathbb{R}^{p} \rightarrow (-\infty, +\infty]$ is a proper and $\mu_f-$strongly convex function, and
    \item $h_i(x_i): \mathbb{R}^{p} \rightarrow \mathbb{R}^m$ is convex and $l_h-$Lipschitz continuous on the convex set $X_i$, i.e., there exists
    $l_h > 0$ such that,
    $$
    \! \! \! \! \| h_i(x_{i,2}) - h_i(x_{i,1}) \| \!\leq \! l_h \| x_{i,2} - x_{i,1} \|, \!~ \forall x_{i,2},\, x_{i,1} \in X_i.
    $$
\end{enumerate}
\end{assumption}


\begin{assumption}\label{ass: slater's condition}
    The set $X_i$ is compact and convex. Furthermore, the Slater condition is satisfied, that is, there exists at least one point $x$ in the relative interior $\operatorname{ri}\left(X\right)$ of $X$  such that both $\sum_{i=1}^n B_i x_i = \sum_{i=1}^n b_i$ and $\sum_{i=1}^n h_i(x_i) < 0$ are satisfied.
\end{assumption}

\section{Algorithm Development} \label{sec: Algorithm Development}
In this section, we first transform problem \eqref{economic dispatch problem} into a dual problem. Building upon classical primal–dual frameworks, we further embed an acceleration mechanism, combining extrapolation, adaptive scaling, and averaging steps, to achieve an accelerated rate.

\subsection{Dual Problem}
By introducing the Lagrangian multipliers $\mu \in \mathbb{R}^d$ and $\delta \in \mathbb{R}_{+}^m$, the Lagrangian function associated with problem \eqref{economic dispatch problem} is defined as 
\begin{align}
    L( x, \mu, \delta) = & \ \sum_{i=1}^n L_i(x_i, \mu, \delta) \notag \\
    = & \ \sum_{i=1}^n \left(F_i( x_i )  + \left\langle \mu,  B_i x_i - b_i \right \rangle + \left\langle \delta,  h_i(x_i) \right \rangle\right),\label{dualLagrangianfunc}
\end{align}
where $F_i \triangleq f_i+\mathbbm{1}_{X_i}: \mathbb{R}^p \rightarrow (-\infty, +\infty],~ i \in \mathcal{V}
$, $\mathbbm{1}_{X_i}(x_i)$ is the indicator function associated with the convex set $X_i$, $i \in \mathcal{V}$, i.e., $
\mathbbm{1}_{X_i}\left(x_i\right)= 0 \text { if } x_i \in X_i$ and $ \mathbbm{1}_{X_i}\left(x_i\right)= +\infty$, otherwise. The dual problem of problem \eqref{economic dispatch problem} is then given as
\begin{align*} 
    & \underset{\mu \in \mathbb{R}^d,\, \delta \in \mathbb{R}_+^m}{\max} \ \underset{ x }{\min } L( x, \mu, \delta) 
    = \ \underset{\mu \in \mathbb{R}^d,\, \delta \in \mathbb{R}_+^m}{\min} \sum_{i=1}^n g_i(\mu,\delta),
\end{align*}    
where for each $i \in \mathcal{V}$, $g_i(\mu,\delta)$ is defined as
\begin{align*}
     g_i(\mu, \delta) \coloneq -\underset{x_i}{\min} \left\{ L_i(x_i, \mu, \delta) \right\}.
\end{align*}
Keep a copy of the variables $\mu$ and $\delta$ at each agent $i \in \mathcal{V}$ as $\mu_i$ and $\delta_i$ to each node $i \in \mathcal{V}$ and then compact them into $y_i \coloneq [ \mu_i^{\rm{T}}~ \delta_i^{\rm{T}}]^{\rm{T}}  \in Y$, where $Y \coloneq \mathbb{R}^{d} \times \mathbb{R}_+^m$. To simplify notation, we do not distinguish between $g_i(\mu_i, \delta_i)$ and $g_i(y_i)$. We have the dual optimization problem as follows,
\begin{align}\label{consensusoptimizationproblem}
\min_{y_i \in Y} \ &\,\sum_{i=1}^n g_i(y_i),  \\  
\operatorname{s.t.} \ &\  y_1 = y_2 = \cdots = y_n. \notag
\end{align}
We denote the Laplacian matrix by $H$, where 
$$
[H]_{i j}=\left\{\begin{array}{ll}
\sum_{s \in \mathcal{N}_i} H_{i s}, & i=j, \\
-H_{i j}, & j \in \mathcal{N}_i, \\
0, & \text {otherwise, }
\end{array} \quad i, j \in \mathcal{V},\right.
$$
with $H_{i j}=H_{j i}>0$ being the weight of edge $\{i, j\} \in \mathcal{E}$. Since $\mathcal{G}$ is a connected undirected graph, the null space of $H$ is $\operatorname{span}\left\{ {1}_n\right\}$. Define $y\coloneq \left[y_1^{\rm{T}} \; y_2^{\rm{T}} \ldots y_n^{\rm{T}}\right]^{\rm{T}} \in \mathcal{Y}$, $\mathcal{Y} \coloneq Y^n$ and $W \coloneq H \otimes I_{d+m} \succeq {O}_{n({d+m})}$. Then, we obtain that the range spaces of the matrices $W$, $W^{\frac{1}{2}}$, $W^{\dagger}$ and $(W^{\dagger})^{\frac{1}{2}}$ are the same and equal to $\left\{y \in \mathbb{R}^{n (d+m)} \mid y_1+\cdots+y_n={0}_{d+m}\right\}$, which is
the orthogonal complement of 
$\left\{y \in \mathbb{R}^{n (d+m)} \mid y_1 =\cdots=y_n\right\}$. Hence, the consensus constraint $y_1=\cdots=y_n$ can be replaced with $W^{\frac{1}{2}} {y}={0}_{n(d+m)}$. In this way,  we reformulate problem \eqref{consensusoptimizationproblem} into the following compact form:
\begin{align}\label{compactdualoptimization}
    \min _{y \in \mathcal{Y} } ~ &\, G(y)=\sum_{i=1}^n g_i(y_i) \\
    \operatorname{s.t.} ~ &\, W^{\frac{1}{2}} y = {0}_{n(d+m)} \notag .
\end{align}

\begin{proposition}\label{prop:smoothness}
    Suppose Assumption~\ref{ass: strongly convex} holds. Then the function $g_i: Y \rightarrow \mathbb{R}$ is Lipschitz smooth with a constant $l_g$, i.e., for any $z_1, z_2 \in Y$,
    \begin{equation}\label{proposition1}
        \| \nabla g_i(z_1) - \nabla g_i(z_2) \| \leq l_g \| z_1 - z_2 \|,
    \end{equation}
    where $l_g = \sqrt{ \frac{2}{\mu_f^2} (\|B_i\|^2 + l_h^2) \cdot \max\{\|B_i\|^2, l_h^2\} }$.
\end{proposition}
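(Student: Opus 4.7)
The plan is to express $g_i$ as the negative of a parametric minimization, invoke Danskin's theorem to get a closed form for $\nabla g_i$, and then translate the strong convexity of the inner problem into a Lipschitz estimate on the minimizer, which will propagate through $B_i$ and $h_i$ to yield the desired bound.

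First, I would note that under Assumption~\ref{ass: strongly convex}, the map $x_i \mapsto L_i(x_i, \mu, \delta) = F_i(x_i) + \langle \mu, B_i x_i - b_i\rangle + \langle \delta, h_i(x_i)\rangle$ is proper and $\mu_f$-strongly convex in $x_i$ for each fixed $(\mu, \delta) \in Y$ (the terms in $\mu,\delta$ contribute an affine-plus-convex component that preserves strong convexity), so the inner minimum is attained at a unique point $x_i^\star(\mu, \delta)$. By Danskin's theorem, $g_i$ is differentiable on $Y$ with
\begin{equation*}
\nabla g_i(\mu, \delta) \;=\; \begin{bmatrix} -\bigl(B_i x_i^\star(\mu,\delta) - b_i\bigr) \\ -\,h_i\bigl(x_i^\star(\mu,\delta)\bigr) \end{bmatrix}.
\end{equation*}

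Next, I would show that $x_i^\star(\cdot)$ is Lipschitz in $z = (\mu, \delta)$. Pick $z_1, z_2 \in Y$ and let $x_j^\star = x_i^\star(z_j)$ for $j=1,2$. Since each $x_j^\star$ is the unique minimizer of the $\mu_f$-strongly convex function $L_i(\cdot, z_j)$, the two strong-convexity inequalities
\begin{align*}
L_i(x_2^\star, z_1) &\geq L_i(x_1^\star, z_1) + \tfrac{\mu_f}{2}\|x_2^\star - x_1^\star\|^2,\\
L_i(x_1^\star, z_2) &\geq L_i(x_2^\star, z_2) + \tfrac{\mu_f}{2}\|x_2^\star - x_1^\star\|^2
\end{align*}
can be added. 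On the left, all $F_i$ terms cancel, leaving only the cross-terms in $\mu_1-\mu_2$ and $\delta_1-\delta_2$. Applying Cauchy-Schwarz together with $\|B_i(x_2^\star - x_1^\star)\| \leq \|B_i\|\|x_2^\star - x_1^\star\|$ and the Lipschitz bound on $h_i$ from Assumption~\ref{ass: strongly convex}, and then using $a\|B_i\| + b\, l_h \leq \sqrt{\|B_i\|^2 + l_h^2}\sqrt{a^2+b^2}$, should yield
\begin{equation*}
\mu_f \|x_2^\star - x_1^\star\|^2 \;\leq\; \sqrt{\|B_i\|^2 + l_h^2}\;\|z_1 - z_2\|\;\|x_2^\star - x_1^\star\|,
\end{equation*}
hence $\|x_2^\star - x_1^\star\| \leq \tfrac{1}{\mu_f}\sqrt{\|B_i\|^2 + l_h^2}\,\|z_1 - z_2\|$.

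Finally, I would plug this into the block expression for the gradient. Squaring,
\begin{equation*}
\|\nabla g_i(z_1) - \nabla g_i(z_2)\|^2 \;=\; \|B_i(x_1^\star - x_2^\star)\|^2 + \|h_i(x_1^\star) - h_i(x_2^\star)\|^2 \;\leq\; \bigl(\|B_i\|^2 + l_h^2\bigr)\|x_1^\star - x_2^\star\|^2,
\end{equation*}
which combined with the Lipschitz estimate on $x_i^\star$ gives a constant of $(\|B_i\|^2 + l_h^2)/\mu_f$. Using $\|B_i\|^2 + l_h^2 \leq 2\max\{\|B_i\|^2, l_h^2\}$ then upper-bounds this by the stated $l_g$. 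The only genuine subtlety is the first step: one must be careful that $L_i$ is strongly convex in $x_i$ (not jointly), and that Danskin's theorem applies even though $F_i$ includes an indicator and $h_i$ is only Lipschitz (not smooth); uniqueness of $x_i^\star$ from strong convexity is exactly what makes $g_i$ differentiable with the stated gradient, and the remaining computations are then routine norm manipulations.
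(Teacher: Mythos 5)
Your proposal is correct and follows essentially the same route as the paper's proof: Danskin's theorem for the gradient formula, the two strong-convexity inequalities at the minimizers added and cancelled to get a Lipschitz bound on $x_i^\star(\cdot)$, and propagation through $B_i$ and $h_i$. The only (harmless) difference is in the final algebra — you apply Cauchy--Schwarz to the pair $(\|\mu_1-\mu_2\|,\|\delta_1-\delta_2\|)$ and obtain the slightly tighter constant $(\|B_i\|^2+l_h^2)/\mu_f$, whereas the paper keeps the two terms separate and uses $(a+b)^2\le 2(a^2+b^2)$; both are dominated by the stated $l_g$.
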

\begin{proof}
    For any $i \in \mathcal{V}$, the strong convexity of $F_i$ and the convexity of $h_i$ imply that the Lagrangian function $L_i(x_i, \mu, \delta)$ is strongly convex in $x_i$. Hence, for any $z_1 = [\mu_1^{\rm{T}}~ \delta_1^{\rm{T}}]^{\rm{T}} \in Y$ and $z_2 = [\mu_2^{\rm{T}}~ \delta_2^{\rm{T}}]^{\rm{T}} \in Y$, the optimal solutions
    \begin{eqnarray*}
    x_{i,1} &:=& \underset{x_i}{\operatorname{argmin}} L_i(x_i, \mu_1, \delta_1), \\ 
    x_{i,2} &:=& \underset{x_i}{\operatorname{argmin}} L_i(x_i, \mu_2, \delta_2)
    \end{eqnarray*}
    are uniquely defined.
    
    By Danskin's Theorem \cite[Proposition B.22]{bertsekas1997nonlinear}, the gradient of $g_i$ with respect to $(\mu, \delta)$ is given by $\nabla g_i \coloneqq [ (\nabla_\mu g_i)^{\rm{T}} ~ (\nabla_\delta g_i)^{\rm{T}} ]^{\rm{T}} $, where
    $\nabla_\mu g_i(\mu_1,\delta_1) = -B_i x_{i,1} + b_i$, $\nabla_\mu g_i(\mu_2,\delta_2) = -B_i x_{i,2}+b_i$, and $\nabla_\delta g_i(\mu_1,\delta_1) = - h_i(x_{i,1})$, $\nabla_\delta g_i(\mu_2,\delta_2) = - h_i(x_{i,2})$. 
    
    By the definition of strong convexity, for any subgradient $\tilde{\nabla}_{x_i} L_i(x_{i,1}, \mu_1, \delta_1) \in \partial_{x_i} L_i(x_{i,1}, \mu_1, \delta_1)$, it holds that
    \begin{align*}
        & \ L_i(x_{i,2}, \mu_1, \delta_1) - L_i(x_{i,1}, \mu_1, \delta_1)\\
        \geq &\ \left\langle \tilde{\nabla}_{x_i} L_i(x_{i,1}, \mu_1, \delta_1), x_{i,2} - x_{i,1} \right\rangle + \frac{\mu_f}{2} \| x_{i,2} - x_{i,1} \|^2.
    \end{align*}
    Since $0 \in \partial_{x_i} L_i(x_{i,1}, \mu_1, \delta_1)$, the above simplifies to
    \begin{equation} \label{(2)-(1)}
        L_i(x_{i,2}, \mu_1, \delta_1) - L_i(x_{i,1}, \mu_1, \delta_1) \geq \frac{\mu_f}{2} \| x_{i,2} - x_{i,1} \|^2.
    \end{equation}
    Similarly, we obtain
    \begin{equation} \label{(1)-(2)}
        L_i(x_{i,1}, \mu_2, \delta_2) - L_i(x_{i,2}, \mu_2, \delta_2) \geq \frac{\mu_f}{2} \| x_{i,2} - x_{i,1} \|^2.
    \end{equation}
    Adding equations \eqref{(2)-(1)} and \eqref{(1)-(2)}, we obtain
    \begin{align*}
        & \mu_f \| x_{i,2} - x_{i,1} \|^2 \\
        \leq&\ \left( L_i(x_{i,2}, \mu_1, \delta_1) - L_i(x_{i,1}, \mu_1, \delta_1) \right) \\
        &\quad + \left( L_i(x_{i,1}, \mu_2, \delta_2) - L_i(x_{i,2}, \mu_2, \delta_2) \right) \\
        =& \ \langle \mu_1 - \mu_2, B_i(x_{i,2} - x_{i,1}) \rangle + \langle \delta_1 - \delta_2, h_i(x_{i,2}) - h_i(x_{i,1}) \rangle \\
        \leq& \ \left( \|B_i\| \| \mu_1 - \mu_2 \| + l_h \| \delta_1 - \delta_2 \| \right) \| x_{i,2} - x_{i,1} \|,
    \end{align*}
    where the last inequality uses the Lipschitz continuity of $h_i$. 
    Dividing both sides by $\| x_{i,2} - x_{i,1} \|$ (nonzero since otherwise the conclusion is trivial), we obtain
    \begin{equation}\label{continuityofx}
        \| x_{i,2} - x_{i,1} \|\leq \frac{\|B_i\|}{\mu_f } \| \mu_1-\mu_2 \| + \frac{l_h}{\mu_f } \| \delta_1 - \delta_2 \|. 
    \end{equation}
    
    With the additional assumption of Lipschitz continuity of $h_i$, we have
    $$
    \| \nabla_\delta g(\mu_1,\delta_1) - \nabla_\delta g(\mu_2,\delta_2)\| \leq l_h \| x_{i,1} - x_{i,2} \|,
    $$
    which, in view of \eqref{continuityofx}, results in
    \begin{align*}
        &\ \| \nabla g(\mu_1,\delta_1) - \nabla g(\mu_2,\delta_2)\|^2 \\
        =& \ \| \nabla_\mu g(\mu_1,\delta_1) - \nabla_\mu g(\mu_2,\delta_2)\|^2 \\
        & \ + \| \nabla_\delta g(\mu_1,\delta_1) - \nabla_\delta g(\mu_2,\delta_2)\|^2\\
        \leq &\ (\|B_i\|^2 + l_h^2) \| x_{i,1} - x_{i,2} \|^2 \\
        \leq &\ \frac{2}{\mu_f^2} (\|B_i\|^2 + l_h^2)\left( \|B_i\|^2 \| \mu_1-\mu_2 \|^2 + l_h^2 \| \delta_1 - \delta_2 \| ^2\right) \\
        \leq &\  l_g^2 ( \| \mu_1-\mu_2 \|^2 + \| \delta_1 - \delta_2 \| ^2 ).
    \end{align*}
    This establishes the Lipschitz smoothness of $g_i$ and completes the proof. 
\end{proof}

Under Assumptions \ref{ass: strongly convex} and \ref{ass: slater's condition}, the strong duality holds. The optimal solution of problem \eqref{economic dispatch problem}, denoted by $ x^* = [(x_1^*)^{\rm{T}}\;(x_2^*)^{\rm{T}} \ldots (x_n^*)^{\rm{T}}]^{\rm{T}}$, is unique.
In addition, we regard $ x^*$ and $y^*$ as the optimal pair if and only if
\begin{enumerate}
    \item $ x^*$ is feasible, i.e., $ \sum_{i=1}^n B_i x_i^* = \sum_{i=1}^n b_i$ and $\sum_{i=1}^n h(x_i^*) \leq 0$.
    \item $y^* = [(y_1^*)^{\rm{T}}\;(y_2^*)^{\rm{T}} \ldots (y_n^*)^{\rm{T}}]^{\rm{T}} \in \mathcal{Y}$ is an optimal solution to the dual problem \eqref{compactdualoptimization}, and $G(y^*) =- F( x^*)$.
\end{enumerate}


\subsection{Accelerated Distributed Algorithm}
In this subsection, we aim to develop a distributed algorithm for solving problem \eqref{compactdualoptimization} by introducing an accelerated linearized method of multipliers. To this end, we define the augmented Lagrangian function as follows,
$$L_\rho(y,  v)=G(y)- \left\langle v, W^{\frac{1}{2}} y\right \rangle +\frac{\rho}{2}\left\|W^{\frac{1}{2}} y\right\|^2, $$
where $ v = [v_1^{\rm{T}}\;v_2^{\rm{T}}\ldots v_n^{\rm{T}}]^{\rm{T}} \in \mathbb{R}^{n(d+m)}$ and $\rho>0$. Recall the method of multipliers \cite{boyd2011distributed}, where updates are given by
\begin{subequations}
\begin{alignat}{2}
    y_{k+1} & = \underset{y \in \mathcal{Y}}{\operatorname{argmin}}\left\{ G(y) \! - \!\left\langle  v_k, W^{\frac{1}{2}} y \right\rangle \!+ \!\frac{\rho}{2}\left\| W^{\frac{1}{2}} y \right\|^2 \right\}, \label{admm primal}\\
     v_{k+1} & = v_k-\rho W^{\frac{1}{2}} y_{k+1}, \label{admm dual}
\end{alignat}
\end{subequations}
where $k$ is the iteration index. 
Note that $G(y)$ is a function that contains the operator $\operatorname{argmin}$. 
In this case, we consider to linearize $G(y)$ using its first-order Taylor expression $G(y_k) + \langle \nabla G(y_k), y - y_k \rangle$. 
In addition, although $G(y)$ is separable, the linear function $\langle  v_k, W^{\frac{1}{2}} y \rangle$ and the quadratic term $\frac{\rho}{2}\| W^{\frac{1}{2}} y \|^2$ are not suitable for distributed computation. To deal with this issue, we define
\begin{equation}\label{lambda v}
    \lambda_k = W^{\frac{1}{2}}  v_k
\end{equation}
for distributed implementation. In view of \eqref{admm dual}, we have 
$$\lambda_{k+1} =\lambda_k-\rho W y_{k+1}.$$
With a proximal term $\frac{\eta}{2} \left\| y - y_k \right\|^2$, the linearized inexact updates of the method of multipliers are given by
\begin{subequations}
    \begin{alignat}{2}
        y_{k+1} =& \, \underset{y \in \mathcal{Y}}{\operatorname{argmin}} \Big\{\langle \nabla G(y_k), y \rangle - \langle \lambda_k, y \rangle + \rho \langle W y_k, y \rangle \notag \\
        & \, \left.+ \frac{\eta}{2} \left\| y - y_k \right\|^2\right\} \notag \\
        =&\, P_{\mathcal{Y}} \{ y_{k} - \frac{1}{\eta}(\nabla G(\tilde{y}_k) - \lambda_k - \rho W y_k)\}, \label{originialprimal}\\
        \lambda_{k+1} =& \,\lambda_k-\rho W y_{k+1},
    \end{alignat}
\end{subequations}
where the gradient of $G(y)$ is defined as
$$\nabla G(y) = \left[\nabla g_1^{\rm{T}}(y_1)~ \nabla g_2^{\rm{T}}(y_2)\, \ldots\, \nabla g_n^{\rm{T}}(y_n)\right]^{\rm{T}}.$$
We introduce the variables $\tilde{y}_k = [\tilde{y}_{1,k}^{\rm{T}} \; \tilde{y}_{2,k}^{\rm{T}} \ldots \tilde{y}_{n,k}^{\rm{T}} ]^{\rm{T}} \in \mathbb{R}^{n p}$ and $\hat{y}_{k} = [\hat{y}_{1, k}^{\rm{T}}\;\hat{y}_{2, k}^{\rm{T}} \ldots \hat{y}_{n,k}^{\rm{T}}]^{\rm{T}} \in \mathbb{R}^{n p}$. Then, we propose the following accelerated distributed algorithm for problem \eqref{compactdualoptimization}, \begin{subequations}\label{algorithms}
\begin{alignat}{5}
\tilde{y}_k= & \,\left(1-\alpha_k \right) \hat{y}_k+\alpha_k y_k, \label{ymd}\\
y_{k+1} = &\, P_{\mathcal{Y}} \{ y_{k} - \frac{1}{\eta_k}(\nabla G(\tilde{y}_k) - \lambda_k - \theta_k W y_k)\}, \label{gradient ykmd}\\
\hat{y}_{k+1}= & \,\left(1-\alpha_k\right) \hat{y}_k+\alpha_k y_{k+1}, \label{yag} \\
\lambda_{k+1}= & \,\lambda_k-\beta_k W y_{k+1}, \label{lambda update}
\end{alignat}    
\end{subequations}
where $\alpha_k, \eta_k, \theta_k, \beta_k \in \mathbb{R}$ are design parameters to be specified later, and $\nabla G(\tilde{y}_k) = [\nabla g_1^{\rm{T}}(\tilde{y}_{1,k})\; \nabla g_2^{\rm{T}}(\tilde{y}_{2,k}) \ldots$ $ \nabla g_n^{\rm{T}}(\tilde{y}_{n,k})]^{\rm{T}}$, $\nabla g_i (\tilde{y}_{i,k})$ in \eqref{gradient ykmd} can be obtained as follows,
\begin{equation}\label{nabla g}
    \nabla g_i (\tilde{y}_{i,k}) =-\begin{bmatrix}
    B_i x_{i,k} - b_i \\
    h_i(x_{i,k})
\end{bmatrix},
\end{equation}
where 
\begin{equation}\label{x_distributed_update}
    x_{i,k} = \underset{x}{\operatorname{argmin}}\left\{ F_i(x) +  \left \langle \begin{bmatrix}
    B_i x_{i} - b_i \\
    h_i(x_{i})
\end{bmatrix}, \tilde{y}_{i,k} \right\rangle \right\}.
\end{equation}
Specifically, the algorithm maintains a triplet that plays the roles of extrapolation and aggregation.
$\tilde{y}_k$ serves as an extrapolated prediction used for gradient evaluation, introducing a momentum-like effect that captures the trend of previous iterations, and $y_k$ performs the proximal correction ensuring stability. In addition, $\hat{y}_{k}$ smooths the trajectory and enables optimal convergence analysis. These variables can affect the trajectory of updates and provide a kind of foresight about where the updating direction is heading. By properly selecting the parameters, the algorithm achieves the accelerated rate typical of Nesterov-type acceleration methods \cite{nesterov1983method}.


In summary, the proposed accelerated distributed algorithm is detailed in Algorithm \ref{algorithm}, where $N$ denotes the number of iterations.   
{
\renewcommand{\baselinestretch}{1.05}
\begin{algorithm}[ht] 
\caption{Accelerated Distributed Algorithm for Economic Dispatch} 
\begin{algorithmic}[1]\label{algorithm}
    \STATE \textbf{Initialization:}
    \STATE For each node $i \in \mathcal{V}$, set $\hat{y}_{i,1} = y_{i,1} \in Y$ and $\lambda_{i,1} = 0$. 
    \FOR{$k = 1,2,\ldots, N $} 
        \STATE Each node $i \in \mathcal{V}$ sends the variable $y_{i,k}$ to its neighbors $j\in \mathcal{N}_i$.
        \STATE Each node $i \in \mathcal{V}$ updates the variable $\tilde{y}_{i,k}= \left(1-\alpha_k\right) \hat{y}_{i,k}+\alpha_k y_{i,k}$ and further updates the variable $x_{i,k}$ according to \eqref{x_distributed_update}.
        \STATE After receiving the information from its neighbors, each node $i \in \mathcal{V}$ computes the aggregated information $t_{i,k} = \sum_{j \in \mathcal{N}_i} H_{i j}(y_{i,k}-y_{j,k})$. 
        \STATE Each node $i \in \mathcal{V}$ updates $$y_{i,k+1} = P_{Y} \{y_{i,k} - \frac{1}{\eta_k}\left( \nabla g_i (\tilde{y}_{i,k}) - \lambda_{i,k} - \theta_k t_{i,k} \right) \},$$ where $\nabla g_i (\tilde{y}_{i,k})$ is defined in \eqref{nabla g}, and then updates $$\hat{y}_{i,k+1}= \left(1-\alpha_k\right) \hat{y}_{i,k}+\alpha_k y_{i,k+1}.$$
        \STATE Each node $i \in \mathcal{V}$ updates $\lambda_{i,k+1}= \lambda_{i,k}-\beta_k t_{i,k}$.
    \ENDFOR
    $$x_{i,N+1} = \underset{x}{\operatorname{argmin}} ~\left\{ F_i(x) + \left\langle \begin{bmatrix}
    B_i x_{i} - b_i \\
    h_i(x_{i})
\end{bmatrix},  \hat{y}_{i,N+1} \right \rangle \right\}$$
     and takes it as the final result.
\end{algorithmic}
\end{algorithm}
}

\section{Convergence Analysis}\label{sec: Convergence Analysis}
In this section, we carry out the convergence analysis for the proposed algorithm.  Specifically, we provide the convergence rates of the primal optimality error and the feasibility error.

\subsection{Convergence Rate}

\begin{theorem}\label{the: opt err}
     Consider the accelerated distributed algorithm in Algorithm  \ref{algorithm} under Assumptions \ref{ass: strongly convex} and \ref{ass: slater's condition}. Assume $\| \nabla G(y^*) \| \leq \xi$. Let $N$ be the number of iterations and the design parameters of Algorithm \ref{algorithm} be 
$
\alpha_k=\frac{2}{k+1}, \theta_k=\frac{\rho N}{k}, \beta_k=\frac{\rho k}{N}, \text { and } \eta_k=\frac{2 l_g + \rho N\|W\|}{k}.
$ 
Then, the convergence rates of the primal optimality error and the violation error of the constraints are as follows,
\begin{align}\label{violation_constraint_rate}
    \left \| \sum_{i=1}^n B_i x_{i,N+1} - b_i \right\| + \left \| \left[\sum_{i=1}^n h_i(x_{i,N+1})\right]_+ \right\| 
    \leq \varepsilon_{\rm{c}}, 
\end{align}
\begin{align}\label{primal gap}
   -\underline{\varepsilon}_{\rm{p}}\textbf{} \leq f(x_{N+1}) - f(x^*) \leq \Bar{\varepsilon}_{\rm{p}},
\end{align}
where 
\begin{align*}
    \varepsilon_{\rm{c}} =  & \ \left(\frac{2 l_g}{N(N+1)}+\frac{\rho }{(N+1)} \| W \| \right) \left\|y_1-y^*\right\|^2 \notag \\
    &\ + \frac{1 }{\rho (N+1) \lambda_2(W)},
\end{align*}
\begin{align*}
     \underline{\varepsilon}_{\rm{p}}
    =&\, \left(\frac{2 l_g}{ N(N+1)}+\frac{\rho }{(N+1)} \| W \| \right) \left\|y_1-y^*\right\|^2 \notag \\
    & + \frac{1}{\rho (N+1)}\left\| \nabla G(y^*) \right\|_{W^{\dagger}}^2 + \| y^* \| \varepsilon_{\rm{c}},
\end{align*}
and $\Bar{\varepsilon}_{\rm{p}} =\frac{1}{l_g}\left( \left(\| \nabla G(y^*) \|+l_g \| y^* \| \right) \varepsilon_{\rm{c}} + \varepsilon_{\rm{c}}^2 \right)$.
\end{theorem}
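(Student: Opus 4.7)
The plan is to prove convergence on the dual/consensus problem \eqref{compactdualoptimization} and then transfer the result to the primal via the minimization rule \eqref{x_distributed_update}. My starting point would be the $l_g$-Lipschitz smoothness of $G$ from Proposition~\ref{prop:smoothness} together with the Nesterov identity $\hat y_{k+1}-\tilde y_k=\alpha_k(y_{k+1}-y_k)$ implied by \eqref{ymd} and \eqref{yag}. Combined with the convexity of $G$, these give a one-step descent
\begin{align*}
G(\hat y_{k+1})-G(y^*) &\le (1-\alpha_k)\bigl(G(\hat y_k)-G(y^*)\bigr) \\
&\quad +\alpha_k\langle \nabla G(\tilde y_k),\,y_{k+1}-y^*\rangle + \tfrac{l_g\alpha_k^2}{2}\|y_{k+1}-y_k\|^2.
\end{align*}
The first-order optimality of the projection step \eqref{gradient ykmd} evaluated at the test point $y^*\in\mathcal Y$, paired with the three-point identity $2\langle a-b,a-c\rangle=\|a-b\|^2+\|a-c\|^2-\|b-c\|^2$, lets me trade $\alpha_k\langle \nabla G(\tilde y_k),\,y_{k+1}-y^*\rangle$ for telescoping quadratics in $\|y_k-y^*\|^2$ plus residual terms involving $\lambda_k-\theta_k Wy_k$. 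Because $Wy^*=0$ and $\lambda_k\in\mathrm{range}(W)$ (since $\lambda_1=0$ and \eqref{lambda update} preserves this range), these residuals telescope against $\lambda_{k+1}=\lambda_k-\beta_k Wy_{k+1}$ in the weighted norm $\|\cdot\|_{W^\dagger}^2$.

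The crux of the argument is a Lyapunov function of the form
\[
V_k \;=\; A_k\bigl(G(\hat y_k)-G(y^*)\bigr) + B_k\|y_k-y^*\|^2 + C_k\|\lambda_k-\lambda^*\|_{W^\dagger}^2,
\]
where $\lambda^*$ is a Lagrange multiplier for the Laplacian consensus constraint. The prescribed schedule $\alpha_k=2/(k+1)$, $\eta_k=(2l_g+\rho N\|W\|)/k$, $\theta_k=\rho N/k$, $\beta_k=\rho k/N$ is tuned so that (i) $A_k\sim k^2$ with $A_k(1-\alpha_k)\le A_{k-1}$, the classical Nesterov scaling that drives the $1/N^2$ piece; (ii) $\eta_k\ge l_g\alpha_k+\theta_k\|W\|$, so the $\|y_{k+1}-y_k\|^2$ coefficient is nonnegative and may be dropped; and (iii) $\theta_k\beta_k$ is constant in $k$, killing the cross-terms produced by the Laplacian penalty during telescoping. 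Summing $V_{k+1}\le V_k$ from $k=1$ to $N$ and dividing by $A_N\sim N^2$ then delivers both $G(\hat y_{N+1})-G(y^*)\le O(1/N^2)\|y_1-y^*\|^2+O(1/N)\|\nabla G(y^*)\|_{W^\dagger}^2$ and, after isolating the $\frac{\rho}{2}\|W^{1/2}y_{k+1}\|^2$ terms preserved in the recursion, the consensus estimate $\|W^{1/2}\hat y_{N+1}\|\le\varepsilon_{\rm c}$.

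For the primal bounds \eqref{primal gap}, I would exploit the fact that $x_{i,N+1}$ is the unique minimizer of the inner problem \eqref{x_distributed_update} at $\hat y_{i,N+1}$, so by \eqref{nabla g} we have $\nabla g_i(\hat y_{i,N+1})=-[B_ix_{i,N+1}-b_i;\,h_i(x_{i,N+1})]$. Testing the optimality inequality against $x_i^*$ and summing over $i$ yields the upper estimate $f(x_{N+1})-f(x^*)\le -\langle y^*,r_{N+1}\rangle + \langle y^*-\hat y_{N+1},r_{N+1}\rangle + (\text{dual gap})$, where $r_{N+1}$ is the coupling-constraint residual; the first two terms are controlled by $\|y^*\|\varepsilon_{\rm c}$ together with the smoothness bound $\|r_{N+1}\|\le\|\nabla G(y^*)\|+l_g\|\hat y_{N+1}-y^*\|$, producing $\bar\varepsilon_{\rm p}$. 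The lower bound $-\underline\varepsilon_{\rm p}$ follows from the saddle-point inequality $L(x^*,y^*)\le L(x_{N+1},y^*)$ combined with $\|y^*\|\varepsilon_{\rm c}$ and the $O(1/N^2)+O(1/N)$ dual gap. The main obstacle, I expect, is the simultaneous control of dual optimality, consensus residual, and constraint violation while keeping all three accelerated: the delicate cancellation between the $\rho$-dependent pieces of $\eta_k$, $\theta_k$, and $\beta_k$ is essential, and any slack there would collapse the $O(1/N^2)$ contribution back to $O(1/N)$.
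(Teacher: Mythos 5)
Your dual-side machinery coincides with the paper's: the one-step descent from $l_g$-smoothness and convexity together with the coupling $\hat y_{k+1}-\tilde y_k=\alpha_k(y_{k+1}-y_k)$ from \eqref{ymd} and \eqref{yag}, the optimality condition of the projection step, the three-point identities, and the observation that the prescribed schedule makes $2k\cdot\tfrac{\eta_k}{2}$, $2k\cdot\tfrac{\theta_k}{2}$, and $\tfrac{2k}{2\beta_k}$ constant in $k$ so that everything telescopes after multiplying by $k(k+1)$. The primal transfer is also essentially the paper's: it uses the conjugacy of \eqref{x_distributed_update} to write $F(x_{N+1})-F(x^*)=-G(\hat y_{N+1})+G(y^*)+\langle\nabla G(\hat y_{N+1}),\hat y_{N+1}\rangle$, which is your saddle-point/optimality-testing argument in conjugate form.

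The genuine gap is in how you propose to obtain $\varepsilon_{\rm c}$. You anchor the Lyapunov function at a fixed multiplier $\lambda^*$, whereas the paper keeps $\lambda$ \emph{arbitrary} throughout the telescoping and arrives at a bound, valid for every $\lambda$, on $G(\hat y_{N+1})-G(y^*)-\langle\lambda,\hat y_{N+1}-y^*\rangle$ with penalty $\tfrac{1}{\rho(N+1)}\|\lambda\|^2_{W^\dagger}$ (inequality \eqref{GGlambda}). That freedom is exploited twice: with $\lambda=\nabla G(y^*)$ for the lower primal bound, and, crucially, with $\lambda=\xi(\hat y_{N+1}-y^*)/\|\hat y_{N+1}-y^*\|$, which combined with the convexity inequality $G(\hat y_{N+1})-G(y^*)\ge-\|\nabla G(y^*)\|\,\|\hat y_{N+1}-y^*\|$ yields a bound on the \emph{distance} $\|\hat y_{N+1}-y^*\|$ itself; this is where the hypothesis $\|\nabla G(y^*)\|\le\xi$ enters. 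Your fixed-$\lambda^*$ Lyapunov delivers only a gap bound, and since $G$ is merely convex and smooth (not strongly convex) no distance bound follows from it. Moreover, the quantity you extract, $\|W^{1/2}\hat y_{N+1}\|$, is the consensus residual of the dual copies, not the primal constraint violation: the theorem's $\varepsilon_{\rm c}$ bounds $\|\sum_i B_ix_{i,N+1}-b_i\|+\|[\sum_ih_i(x_{i,N+1})]_+\|$, which by \eqref{nabla g} is governed by $\nabla G(\hat y_{N+1})$ and is controlled in the paper via $\|\nabla G(\hat y_{N+1})-\nabla G(y^*)\|\le l_g\|\hat y_{N+1}-y^*\|$ --- again requiring the distance bound. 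The same distance bound underpins your upper primal estimate (your $\|r_{N+1}\|\le\|\nabla G(y^*)\|+l_g\|\hat y_{N+1}-y^*\|$ presupposes it). The missing idea, then, is the adversarial choice of $\lambda$ converting the telescoped inequality into a bound on $\|\hat y_{N+1}-y^*\|$; without it, neither the feasibility bound nor $\bar\varepsilon_{\rm p}$ closes.
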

\begin{proof}
For any $\lambda \in \mathbb{R}^{n(d+m)}$, we have
\begin{align}\label{Bregman 1}
& \ G\left(\hat{y}_{k+1}\right) -G\left(y^*\right)-\left\langle \lambda, \hat{y}_{k+1}-y^* \right \rangle \notag \\
& -\left(1-\alpha_k\right)\left[G\left(\hat{y}_k\right)-G\left(y^*\right)-\left\langle \lambda, \hat{y}_{k}-y^* \right \rangle \right] \notag \\
= &\ G\left(\hat{y}_{k+1}\right)-\left(1-\alpha_k\right) G\left(\hat{y}_k\right)-\alpha_k G\left(y^*\right) \notag \\
& -\alpha_k\left\langle \lambda, y_{k+1} - y^*\right\rangle .
\end{align}
By Proposition \ref{prop:smoothness}, $G(y)$ is convex and $l_g$-smooth, and hence
\begin{align} \label{convex and smooth}
G\left(\hat{y}_{k+1}\right) \leq &\ G\left(\tilde{y}_k\right)+\left\langle\nabla G\left(\tilde{y}_k\right), \hat{y}_{k+1}-\tilde{y}_k\right\rangle \notag \\ 
& \ +\frac{ \alpha_k^2 l_g }{2 }\left\|y_{k+1}-y_k\right\|^2 \notag \\
= & \left(1-\alpha_k\right)\left[G\left(\tilde{y}_k\right)+\left\langle\nabla G\left(\tilde{y}_k\right), \hat{y}_k-\tilde{y}_k\right\rangle\right] \notag \\
& +\alpha_k\left[G\left(\tilde{y}_k\right)+\left\langle\nabla G\left(\tilde{y}_k\right), y^*-\tilde{y}_k\right\rangle\right] \notag \\
& +\!\alpha_k\left\langle\nabla G\left(\tilde{y}_k\right), y_{k+1}\!-\!y^*\right\rangle\!+\!\frac{ \alpha_k^2 l_g }{2}\left\|y_{k+1}\!-\!y_k\right\|^2 \notag\\
\leq & \left(1-\alpha_k\right) G\left(\hat{y}_k\right) +  \alpha_k G(y^*)\notag\\
& +\!\alpha_k\left\langle\nabla G\left(\tilde{y}_k\right), y_{k+1}\!-\!y^*\right\rangle \! + \! \frac{\alpha_k^2 l_g}{2}\left\|y_{k+1}\!-\!y_k\right\|^2,
\end{align}
where the second equality has been derived using \eqref{ymd} and \eqref{yag}.
Substituting \eqref{convex and smooth} into the right-hand side of \eqref{Bregman 1}, we obtain the upper bound of \eqref{Bregman 1} as
\begin{align}\label{Bregman 2}
&\ G\left(\hat{y}_{k+1}\right)-G\left(y^*\right)-\left\langle \lambda, \hat{y}_{k+1}-y^* \right \rangle \notag \\
& -\left(1-\alpha_k\right)\left[G\left(\hat{y}_k\right)-G\left(y^*\right)-\left\langle \lambda, \hat{y}_{k}-y^* \right \rangle\right] \notag \\
\leq &\ \alpha_k\left\langle\nabla G\left(\tilde{y}_k\right), y_{k+1}-y^*\right\rangle+\frac{\alpha_k^2 l_g}{2}\left\|y_{k+1}-y_k\right\|^2 \notag \\
& -\alpha_k\left\langle \lambda,  y_{k+1}-y^*\right\rangle.
\end{align}
According to \eqref{gradient ykmd}, we have the optimality condition
$$
\begin{aligned}
    0 \in & \ \nabla G \left(\tilde{y}_k\right) + \partial \mathbbm{1}_{\mathcal{Y}}( y_{k+1} ) \\
    &\ -\eta_k \left(y_k-y_{k+1}\right) - \theta_k W y_k - \lambda_k,
\end{aligned}
$$
where $\partial \mathbbm{1}_{\mathcal{Y}}( y_{k+1} )$ is a subgradient of the indicator function at $y_{k+1}$. Then, we can further express the right-hand side of \eqref{Bregman 2} as
\begin{align}\label{Bregman 3}
&\ \alpha_k\left\langle\nabla G\left(\tilde{y}_k\right), y_{k+1}-y^*\right\rangle+\frac{\alpha_k^2 l_g }{2}\left\|y_{k+1}-y_k\right\|^2 \notag \\
&-\alpha_k\left\langle \lambda, y_{k+1} - y^* \right\rangle \notag \\
= & \alpha_k\left\langle\eta_k\left(y_k-y_{k+1}\right), y_{k+1}-y^*\right\rangle+ \alpha_k\left\langle \lambda_k,  y_{k+1}-y^* \right\rangle \notag \\
& +\alpha_k \theta_k\left\langle W y_k, y_{k+1}-y^*\right\rangle-\alpha_k\left\langle  \lambda , y_{k+1} - y^* \right\rangle \notag \\
& + \langle \partial \mathbbm{1}_{\mathcal{Y}}( y_{k+1} ), y_{k+1} - y^* \rangle + \frac{\alpha_k^2 l_g}{2}\left\|y_{k+1}-y_k\right\|^2 \notag \\
\leq &\ \alpha_k\left\langle\eta_k\left(y_k-y_{k+1}\right), y_{k+1}-y^*\right\rangle \notag \\
&  + \alpha_k \left\langle  \lambda_k - \lambda, y_{k+1}-y^*\right\rangle \notag \\
& +\alpha_k \theta_k\left\langle W y_k, y_{k+1}-y^*\right\rangle+\frac{\alpha_k^2 l_g}{2 }\left\|y_{k+1}-y_k\right\|^2 \notag \\
= &\ \alpha_k\!\Big[\! \left\langle\eta_k \left(y_k -y_{k+1}\right), y_{k+1}-y^*\right\rangle \! + \! \left\langle  \lambda_{k+1}- \lambda ,  y_{k+1} - y^* \right\rangle \notag \\
& + \left\langle\left(\frac{\theta_k}{\beta_k}-1\right)\left( \lambda_k- \lambda_{k+1}\right), y_{k+1}-y^* \right\rangle \notag \\
& + \theta_k\left\langle W\left( y_{k+1} - y_k\right),  y_{k+1}-y^* \right\rangle \notag \\
& +\frac{\alpha_k l_g}{2}\left\|y_{k+1}-y_k\right\|^2\!\Big],
\end{align}
where the inequality is due to the convexity of the indicator function.
Next, by transforming inner-product terms into norms, we have 
\begin{subequations}
\begin{alignat}{3}
&\ \left\langle\eta_k\left(y_k-y_{k+1}\right), y_{k+1}-y^*\right\rangle \notag \\
= &\ \frac{\eta_k}{2}\left(\left\|y_k-y^*\right\|^2\!-\!\left\|y_{k+1}-y^*\right\|^2\!-\!\left\|y_k-y_{k+1}\right\|^2\right), \label{norms 1}\\
&\ \left\langle  \lambda_{k+1} - \lambda, y_{k+1} - y^* \right\rangle = \left\langle \lambda_{k+1}- \lambda, \frac{W^{\dagger} }{\beta_k} \left(  \lambda_k -  \lambda_{k+1} \right)\right\rangle \notag \\
= &\ \frac{1}{2 \beta_k}\left(\!\left\| \lambda_k - \lambda \right\|_{W^{\dagger}}^2 - \left\| \lambda_{k+1} - \lambda \right\|_{W^{\dagger}}^2 - \left\| \lambda_k -  \lambda_{k+1}\right\|_{W^{\dagger}}^2 \right),\label{norms 2}\\
&\ \theta_k\left\langle W (y_{k+1}-y_k), y_{k+1}-y^*\right\rangle \notag \\
= &\ \frac{\theta_k}{2}\left(\!\left\|y_k\!-\!y_{k+1}\right\|_W^2\!+\!\left\|y_{k+1}\!-\!y^*\right\|_W^2\!-\!\left\|y_k\!-\!y^*\right\|_W^2\!\right) .\label{norms 3}
\end{alignat}
\end{subequations}
By the update rule \eqref{lambda update}, we obtain that
\begin{align}\label{norms 4}
& \left\langle\left(\frac{\theta_k}{\beta_k}-1\right)\left( \lambda_k- \lambda_{k+1}\right), \left(y_{k+1}-y^*\right)\right\rangle \notag \\
= & \left\langle\left(\frac{\theta_k}{\beta_k}-1\right)\left( \lambda_k- \lambda_{k+1}\right), W^{\dagger} \left( \lambda_k- \lambda_{k+1} \right)\right\rangle \notag \\
= & \frac{\theta_k-\beta_k}{\beta_k^2}\left\| \lambda_k- \lambda_{k+1}\right\|_{W^{\dagger}}^2.
\end{align}
Combining the relations \eqref{Bregman 2}-\eqref{norms 4} yields
\begin{align*}
&\ G\left(\hat{y}_{k+1}\right)-G\left(y^*\right)-\left\langle \lambda, \hat{y}_{k+1}-y^* \right \rangle \\
&-\left(1-\alpha_k\right)\left[G\left(\hat{y}_k\right)-G\left(y^*\right)-\left\langle \lambda, \hat{y}_{k}-y^* \right \rangle\right] \\
\leq &\ \alpha_k \Bigg[\frac{\eta_k}{2}\left(\left\|y_k-y^*\right\|^2-\left\|y_{k+1}-y^*\right\|^2-\left\|y_k-y_{k+1}\right\|^2\right) \\
& \quad+\frac{1}{2 \beta_k}\left(\!\left\| \lambda_k- \lambda \right\|_{W^{\dagger}}^2-\left\| \lambda_{k+1}- \lambda \right\|_{W^{\dagger}}^2-\left\| \lambda_k- \lambda_{k+1}\right\|_{W^{\dagger}}^2\!\right) \\
& \quad-\frac{\theta_k-\beta_k}{\beta_k^2}\left\| \lambda_k- \lambda_{k+1}\right\|_{W^{\dagger}}^2 \\
& \quad+\!\frac{\theta_k}{2}\!\left(\left\|y_k-y_{k+1}\right\|_W^2+\left\|y_{k+1}-y^*\right\|_W^2-\left\|y_k-y^*\right\|_W^2\right) \\
&\left.\quad+\frac{\alpha_k l_g }{2}\left\|y_{k+1}-y_k\right\|^2\right].    
\end{align*}
Since $\alpha_k = \frac{2}{k+1}$, by multiplying $k(k+1)$ on both sides of the above inequality, we have 
\begin{align}\label{23}
& {k(k+1)} \left[ G\left(\hat{y}_{k+1}\right)-G\left(y^*\right)-\left\langle \lambda, \hat{y}_{k+1}-y^* \right \rangle\right] \notag \\
&-k(k-1)\left[G\left(\hat{y}_k\right)-G\left(y^*\right)-\left\langle \lambda, \hat{y}_{k}-y^* \right \rangle\right] \notag \\
\leq & \ 2k {\Big[\frac{\eta_k}{2}\left(\left\|y_k-y^*\right\|^2-\left\|y_{k+1}-y^*\right\|^2-\left\|y_k-y_{k+1}\right\|^2\right)} \notag \\
& +\frac{1}{2 \beta_k}\left(\left\| \lambda_k- \lambda \right\|_{W^{\dagger}}^2-\left\| \lambda_{k+1}- \lambda \right\|_{W^{\dagger}}^2-\left\| \lambda_k- \lambda_{k+1}\right\|_{W^{\dagger}}^2\right) \notag \\
& +\frac{\theta_k}{2}\left(\left\|y_k-y_{k+1}\right\|_W^2+\left\|y_{k+1}-y^*\right\|_W^2-\left\|y_k-y^*\right\|_W^2\right) \notag \\
& +\frac{\alpha_k l_g }{2}\left\|y_{k+1}-y_k\right\|^2 -\frac{\theta_k-\beta_k}{\beta_k^2}\left\| \lambda_k- \lambda_{k+1}\right\|_{W^{\dagger}}^2 \Big] .
\end{align} 
Summing both sides of \eqref{23} from $k=1$ to $k=N$ leads to
\begin{align}
&\ {N(N+1)} \left[ G\left(\hat{y}_{N+1}\right)-G\left(y^*\right) -\left\langle \lambda, \hat{y}_{N+1}-y^* \right\rangle \right] \notag\\
\leq & \left(2 l_g + \rho N \| W \| \right) \left\|y_1-y^*\right\|^2 \notag \\
&+ \frac{N}{\rho} \left(\left\| \lambda_1 -  \lambda \right\|_{W^{\dagger}}^2 -\left\| \lambda_N - \lambda \right\|_{W^{\dagger}}^2 \right). \notag
\end{align}
Considering that $\lambda_1 = 0_{(d+m)p}$ and dividing both sides of the above inequality by $N(N+1)$, we have
\begin{align}\label{GGlambda}
    &\ G\left(\hat{y}_{N+1}\right)-G\left(y^*\right)-\left\langle \lambda, \hat{y}_{N+1}-y^* \right\rangle \notag \\
    \leq & \left(\frac{2 l_g}{ N(N+1)}+\frac{\rho }{(N+1)} \| W \| \right) \left\|y_1-y^*\right\|^2 \notag \\
    & + \frac{1}{\rho (N+1)}\left\| \lambda \right\|_{W^{\dagger}}^2.
\end{align}  
If we take $\lambda = \frac{ \xi(\hat{y}_{N+1}-y^*)}{\| \hat{y}_{N+1}-y^* \|}$, the above inequality becomes
\begin{align}\label{GGnormyy}
    &\ G\left(\hat{y}_{N+1}\right)-G\left(y^*\right)+ \xi \| \hat{y}_{N+1}-y^* \| \notag \\
    \leq & \left(\frac{2 l_g}{ N(N+1)}+\frac{\rho }{(N+1)} \| W \| \right) \left\|y_1-y^*\right\|^2 \notag \\
    & + \frac{\xi}{\rho (N+1) \| \hat{y}_{N+1}-y^* \|^2} \| \hat{y}_{N+1}-y^* \|_{W^{\dagger}}^2 \notag \\
    \leq & \left(\frac{2 l_g}{ N(N+1)}+\frac{\rho }{(N+1)} \| W \| \right) \left\|y_1-y^*\right\|^2 \notag \\
    & + \frac{ \xi }{\rho (N+1) \lambda_2(W)}.
\end{align} 
By the convexity of $G$, 
\begin{align}\label{GG geq -nabla yy}
    &\ G\left(\hat{y}_{N+1}\right)-G\left(y^*\right) \geq -\| \nabla G(y^*) \| \| \hat{y}_{N+1}-y^* \|.
\end{align}
Plugging \eqref{GG geq -nabla yy} into \eqref{GGnormyy} yields
\begin{align}\label{sup_upperbound_normy}
    &\ \| \hat{y}_{N+1}-y^* \| \notag \\
    \leq & \frac{1}{\xi - \| \nabla G(y^*) \| }\Bigg[\left(\frac{2 l_g}{ N(N+1)}+\frac{\rho }{(N+1)} \| W \| \right) \left\|y_1-y^*\right\|^2 \notag \\
    & + \frac{ \xi }{\rho (N+1) \lambda_2(W)} \Bigg].
\end{align} 
Therefore, we get the upper bound for the violation of constraints 
$$
\begin{aligned}
    & \left \| \sum_{i=1}^n B_i x_{i,N+1} - b_i \right\| + \left \| \left[\sum_{i=1}^n h_i(x_{i,N+1})\right]_+ \right\| \\
    \leq & \| \nabla G(\hat{y}_{N+1}) - \nabla G(y^*) \| \notag \\
    \leq & l_g \| \hat{y}_{N+1} - y^* \| \notag \\
    \leq & \frac{1}{\xi - \| \nabla G(y^*) \| }\Bigg[\left(\frac{2 l_g^2}{ N(N+1)}+\frac{\rho l_g}{(N+1)} \| W \| \right) \left\|y_1-y^*\right\|^2 \notag \\
    & + \frac{l_g \xi}{\rho (N+1) \lambda_2(W)}\Bigg] .
\end{aligned}
$$
As for the optimality error of the primal problem, we have
\begin{align}\label{upper bound}
    &\ F(x_{N+1}) - F(x^*) \notag \\
    = & \, -G(\hat{y}_{N+1}) + G(y^*) + \langle \nabla G(\hat{y}_{N+1}), \hat{y}_{N+1}\rangle \notag \\
    = & \, -G(\hat{y}_{N+1}) + G(y^*) + \langle \nabla G(y^*), \hat{y}_{N+1}-y^* \rangle  \notag \\
    & \, + \langle \nabla G(\hat{y}_{N+1}) - \nabla G(y^*), y^*\rangle\notag \\
    &\, + \langle \nabla G(\hat{y}_{N+1}) - \nabla G(y^*), \hat{y}_{N+1} - y^*\rangle \notag \\
    \leq & \ l_g \| y^* \| \|\hat{y}_{N+1} - y^*\| + l_g \| \hat{y}_{N+1} - y^* \|^2 ,
\end{align}
where the last inequality is due to the convexity and Lipschitz smoothness of $G$. Similarly, 
\begin{align}\label{lower bound}
    &\, -F(x_{N+1}) + F(x^*) \notag \\
    = & \ G(\hat{y}_{N+1}) - G(y^*) - \langle \nabla G(\hat{y}_{N+1}), \hat{y}_{N+1}\rangle \notag \\
    = & \ G(\hat{y}_{N+1}) - G(y^*) - \langle \nabla G(y^*), \hat{y}_{N+1}-y^* \rangle  \notag \\
    & \, - \langle \nabla G(\hat{y}_{N+1}) - \nabla G(y^*), y^*\rangle\notag \\
    &\, - \langle \nabla G(\hat{y}_{N+1}) - \nabla G(y^*), \hat{y}_{N+1} - y^*\rangle \notag \\
    \leq & \, \left(\frac{2 l_g}{ N(N+1)}+\frac{\rho }{(N+1)} \| W \| \right) \left\|y_1-y^*\right\|^2 \notag \\
    & + \frac{1}{\rho (N+1)}\left\| \nabla G(y^*) \right\|_{W^{\dagger}}^2 + l_g \| y^* \| \|\hat{y}_{N+1} - y^*\|,
\end{align}
where the last inequality is by taking $\lambda = \nabla G(y^*)$ in \eqref{GGlambda}.
Inequalities \eqref{lower bound}, \eqref{upper bound} and \eqref{sup_upperbound_normy}, together with the fact that $x_{N+1}, x^* \in X$, imply \eqref{primal gap}.
\end{proof}


\section{Numerical Experiments} \label{sec: Simulation}
We consider the following distributed convex optimization problem with nonsmooth objective functions and nonlinear inequality constraints:
$$
\begin{aligned}
\underset{x_i \in X_i }{\min} & \sum_{i=1}^n x_i^{\rm{T}} A_i x_i+b_i^{\rm{T}} x_i + \| x_i \|_1 \\
\operatorname{s.t.} & \sum_{i=1}^n C_i x_i  = 0_p\\
& \sum_{i=1}^n \left\|x_i-r_i\right\|_1 \leq \sum_{i=1}^n d_i ,
\end{aligned}
$$
where $x_i \in \mathbb{R}^p$ is the local decision variable of agent $i$, and $A_i \in \mathbb{R}^{p \times p},~ b_i,~r_i \in \mathbb{R}^p,~ C_i \in \mathbb{R}^{p \times p}$, and $d_i>0$ are given problem data.

In our experimental setup, we set the number of agents to $n=20$, and the local dimension to $p=5$. Each local variable $x_i \in \mathbb{R}^p$ is subject to a box constraint $x_i \in X_i:=\left\{x_i \in \mathbb{R}^p | \underline{x}_i \leq x_i \leq \bar{x}_i\right\}$, where the lower and upper bounds $\underline{x}_i$ and $\bar{x}_i$ are independently generated with their entries drawn i.i.d. from the uniform distributions $\mathcal{U}[-10,-9]$ and $\mathcal{U}[9,10]$, respectively. Each matrix $A_i$ is constructed as $A_i=U_i \Lambda_i U_i^{\rm{T}}$, where $U_i$ is a random orthogonal matrix generated from the QR decomposition of a Gaussian matrix, and $\Lambda_i$ is a diagonal matrix with eigenvalues linearly spaced in the interval $[1, \kappa]$ with $\kappa=100$, ensuring strong convexity and a controlled condition number. The linear term coefficients $C_i \in \mathbb{R}^p$ and reference vectors $ b_i \in \mathbb{R}^p$ are generated independently from the multivariate standard normal distribution, i.e., $C_i,~ b_i \sim \mathcal{N}\left(0, I_p\right)$. Each matrix $C_i \in \mathbb{R}^{p \times p}$ is of full rank, different from the identity matrix, and randomly generated to ensure nontrivial global coupling in the equality constraint. The scalar thresholds $d_i \in \mathbb{R}_+$ are independently sampled from the uniform distribution $\mathcal{U}(1,6)$. The optimal solution is solved by using YALMIP with MOSEK, which serves as a benchmark to evaluate the accuracy and convergence performance of distributed optimization algorithms. The communication network for the simulation is constructed as a connected undirected graph $\mathcal{G}=(\mathcal{V}, \mathcal{E})$, where $\mathcal{V}=\{1,2, \ldots, 20\}$ represents the set of nodes (corresponding to buses) and the edge set $\mathcal{E}$ is defined by the rule $(i, i+1)$ for all $1 \leq i \leq 19$ and includes the edge $(1,20)$. This means each node is connected to its nearest neighbor and the next-nearest neighbor, forming a sparse yet connected topology.

To evaluate the performance of the proposed algorithm, we have compared with the state-of-the-art algorithms, the distributed subgradient algorithm \cite{liang2019distributed2} the augmented Lagrangian tracking algorithm (ALT) \cite{falsone2023augmented} and the integrated primal-dual proximal algorithm (IPLUX) \cite{wu2022distributed}. All algorithms are initialized with the same primal iterates. 
\begin{figure}[!htb]
    \centering     
    \includegraphics[width=0.45\textwidth]{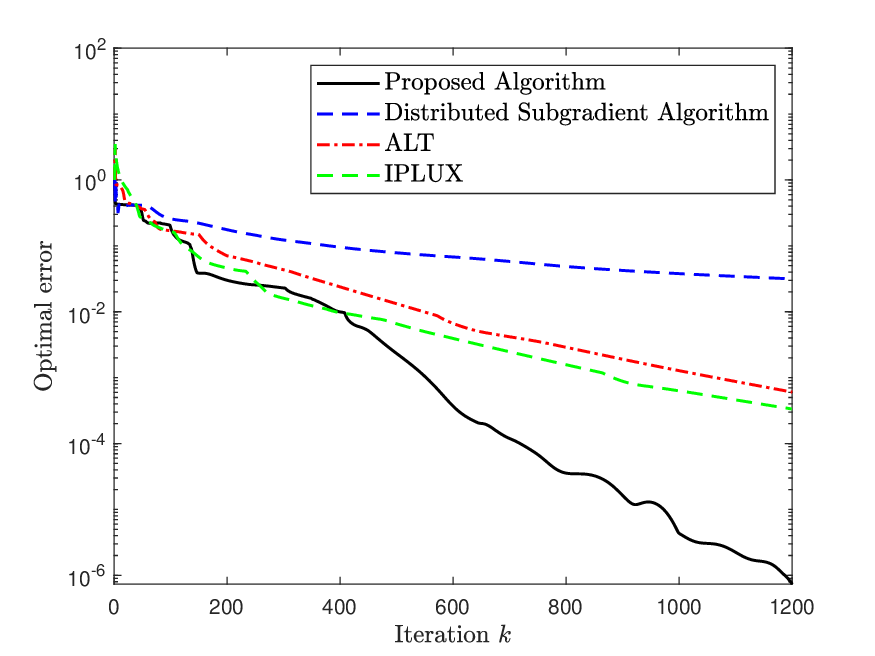}
    \caption{The primal optimal error under the four algorithms.}
    \label{fig: power mismatch}
\end{figure}

\begin{figure}[!htb]
    \centering        
    \includegraphics[width=0.45\textwidth]{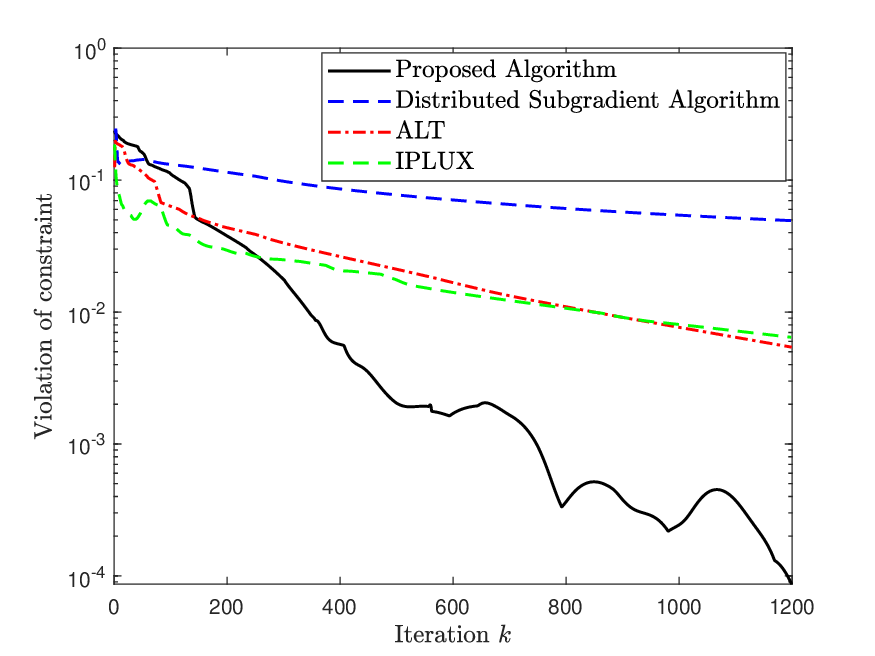}
    \caption{The absolute error of the violation of constraints under the four algorithms.}
    \label{fig: demand}
\end{figure}

Figure \ref{fig: power mismatch} plots the primal optimality error, $\frac{|f(x_{k}) - f(x^*)|^2}{|f(x_1) - f(x^*) |^2}$, under the four algorithms. The optimality error of the proposed method reaches $10^{-6}$ by $k=1200$.
In comparison, ALT decreases monotonically but more slowly, ending near $10^{-2}$, and the distributed subgradient baseline exhibits the smallest decay and remains in the $10^{-1}\!-\!10^{0}$ range over the horizon. 
Figure \ref{fig: demand} presents the absolute error of the violation of constraints, i.e., $\| \sum_{i=1}^n C_i x_{i,k} \| + [\sum_{i=1}^n (\left\|x_{i,k}-r_i\right\|_1 - d_i)]_+$. As seen, our algorithm again outperforms the others, reaching a violation error below $10^{-4}$ earlier than all other algorithms.  
Across both metrics, the proposed algorithm converges faster and has a markedly higher accuracy than the two baselines, highlighting its advantage in both speed and final precision under the same iterations.

\section{Conclusions} \label{sec: Conclusion}
We presented an accelerated distributed algorithm for the nonsmooth CCP with affine equality and nonlinear inequality couplings. By reformulating the dual as a consensus objective and combining a look-ahead linearization with a penalty on the Laplacian constraint, the algorithm keeps the per-iteration work simple, one local subproblem per node and one round of neighbor exchanges, while improving the iteration-wise rate. The analysis delivers non-ergodic bounds on both the primal optimality gap and the feasibility residual, closing a gap with prior methods that are either asymptotic or ergodic bounds. On representative testbeds, the proposed algorithm reaches tighter feasibility and lower error within the same iteration budget than augmented-Lagrangian tracking and dual subgradient algorithms. 
\bibliographystyle{IEEEtran}
\bibliography{reference}

\end{document}